\newtheorem{thm}{Theorem}
\newtheorem{pro}[thm]{Proposition}
\newtheorem{cor}[thm]{Corollary}
\newtheorem{lem}[thm]{Lemma}
\newtheorem{con}{Conjecture}
\theoremstyle{remark}
\newtheorem{exa}[thm]{Example}
\newcommand{\idealp}{{\mathfrak p}}
\newcommand{\idealm}{{\mathfrak m}}
\newcommand{\aut}{{\rm Aut}}
\newcommand{\derk}{{\rm Der_k}}
\newcommand{\A}{{\mathbb A}}
\newcommand{\C}{{\mathbb C}}
\newcommand{\N}{{\mathbb N}}
\newcommand{\Q}{{\mathbb Q}}
\newcommand{\tor}{\xymatrix{\ar@{-->}[r]&}}
\begin{document}

\author{Rene Baltazar}

\address{Instituto de Matematica, Estatistica e Fisica\\
  Universidade Federal do Rio Grande - FURG\\
  Santo Antonio da Patrulha, 95500-000}
\email[R.~Baltazar]{rene.baltazar@ufrgs.br}

\thanks{Research of R. Baltazar was partially supported by CAPES}

\title[On solutions for derivations]{On simple Shamsuddin derivations in two variables}
\maketitle

\begin{abstract} We study the subgroup of $k$-automorphisms of $k[x,y]$ which commute with
a simple derivation $D$ of $k[x,y].$ We prove, for example, that this subgroup
is trivial when $D$ is a Shamsuddin simple derivation. In the general case of
simple derivations, we obtain properties for the elements of this subgroup.
\end{abstract}

\section{introduction}

Let $k$ be an algebraically closed field of characteristic zero and the ring $k[x,y]$ of polynomials over $k$ in two variables. 

A $k$-\emph{derivation} $d:k[x,y]\rightarrow k[x,y]$ of $k[x,y]$ is a $k$-linear map such that $$d(ab)=d(a)b+ad(b)$$ for any $a,b \in k[x,y]$. Denoting by $\derk(k[x,y])$ the set of all $k$-derivations of $k[x,y]$. Let $d \in \derk(k[x,y])$. An ideal $I$ of $k[x,y]$ is called $d$-\emph{stable} if $d(I) \subset I$. For example, the ideals $0$ and $k[x,y]$ are always $d$-stable. If $k[x,y]$ has no other $d$-stable ideal it is called $d$-\emph{simple}. Even in the case of two variables, a few examples of simple derivations are known (see for explanation \cite{BLL}, \cite{Cec}, \cite{Now08}, \cite{BP}, \cite{Kour} and \cite{Leq}).

We denote by $\aut(k[x,y])$ the group of $k$-automorphisms of $k[x,y]$. Let $\aut(k[x,y])$ act on $\derk(k[x,y])$ by: $$(\rho,D) \mapsto \rho^{-1}\circ D \circ \rho=\rho^{-1} D \rho.$$

Fixed a derivation $d \in \derk(k[x,y])$. The isotropy subgroup,
with respect to this group action, is $$\aut(k[x,y])_D:= \{ \rho \in \aut(k[x,y]) / \rho D=D \rho\}.$$

We are interested in the following question proposed by I.Pan (see \cite{B2014}):

\begin{con} If $d$ is a simple derivation of $k[x,y]$, then $\aut(k[x,y])_d$ is finite.
\end{con}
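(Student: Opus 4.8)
The plan is to translate the commutation relation $\rho D = D\rho$ into a small system of differential-polynomial equations for the two polynomials $f := \rho(x)$ and $g := \rho(y)$, and then to let the simplicity of $D$ rigidify these. Writing a Shamsuddin derivation as $D = \partial_x + (a(x)y + b(x))\partial_y$, I would first evaluate $\rho D = D\rho$ on the generators $x$ and $y$. Since $D(x)=1$ and $D(y)=ay+b$, this yields exactly two conditions: $D(f) = 1$ and $D(g) = a(f)\,g + b(f)$. The first of these is the decisive rigidifying step: because $D(x)=1$ as well, it says $D(f-x)=0$, i.e. $f-x$ is a constant of $D$. As $D$ is simple its ring of constants is just $k$, so $f = x+\lambda$ for some $\lambda\in k$.

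Next I would exploit that $(f,g)=(x+\lambda,g)$ is an automorphism to force $g$ to be affine in $y$: the Jacobian determinant of this pair is $\partial g/\partial y$, which must lie in $k^\times$, so $g = \alpha y + h(x)$ with $\alpha\in k^\times$ and $h\in k[x]$. Substituting this into $D(g)=a(f)g+b(f)$ and comparing the coefficients of $y$ gives $a(x)=a(x+\lambda)$. A simple Shamsuddin derivation necessarily has $a$ non-constant (if $a=0$ or $a$ is a nonzero constant, the equation $q'=aq+b$ has a polynomial solution $q$, and then $y-q$ spans a proper nonzero $D$-stable ideal, contradicting simplicity), so $\lambda=0$ and $f=x$. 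The $y$-free remainder of the equation then collapses to the single scalar-linear identity $h' - a\,h = (1-\alpha)\,b$.

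Here the simplicity criterion enters decisively. If $\alpha\neq 1$, then $q := h/(1-\alpha)$ is a polynomial solution of $q' = aq + b$; but $D(y-q) = a(y-q)$, so $(y-q)$ is again a proper nonzero $D$-stable ideal, contradicting simplicity. Hence $\alpha = 1$, and the identity reduces to $h' = a\,h$. A degree count (with $\deg a \geq 1$) forces $h=0$, since $\deg(ah) \geq \deg h + 1 > \deg h'$ for any nonzero $h$. Therefore $\rho(x)=x$ and $\rho(y)=y$, the isotropy group is trivial, and in particular the conjecture holds for this class.

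For a general simple derivation the opening move still applies—$D(\rho(x))$ and $D(\rho(y))$ are prescribed, and one can attempt to bound the admissible $\rho$ using $\ker D = k$—but the main obstacle is that $D(x)$ and $D(y)$ are now arbitrary polynomials, so $f$ is no longer pinned to $x+\lambda$ and the clean affine structure of $g$ is lost. I expect the genuinely hard part to be controlling the degrees of $f$ and $g$ under commutation in the absence of the Shamsuddin normal form. Consequently, the realistic target in this generality is not triviality but finiteness: one would aim to show that the pairs $(f,g)$ satisfying both the commutation equations and the automorphism constraint form a $0$-dimensional, hence finite, set.
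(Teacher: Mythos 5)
Your proposal does not prove the stated conjecture in full generality---but neither does the paper, which leaves it as a conjecture and establishes it only for Shamsuddin derivations (Theorem \ref{thm3.2}), plus partial results for general simple derivations in \S 3. What you do prove, triviality of the isotropy for a simple Shamsuddin derivation, is exactly the paper's main theorem, and your argument is correct while differing from the paper's in several steps, generally to its advantage. Where the paper expands $\rho(x)=\sum a_i(x)y^i$ and compares top coefficients in $y$ to force $s=0$, you note $D(\rho(x)-x)=0$ and invoke $\ker D=k$ (valid: a nonconstant constant of $D$ generates a proper nonzero $D$-stable ideal). Where the paper derives that $\rho(y)$ is affine in $y$ by another top-coefficient comparison (obtaining $a(x+c)=ta(x)$ and then $t=1$), you get it immediately from the Jacobian condition $\partial g/\partial y\in k^{\times}$. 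Where the paper cites Lequain to exclude constant $a$, and Nowicki's Proposition 13.3.3 together with Theorem \ref{thm4} to rule out $b_1\neq 1$, you exhibit the invariant ideal $(y-q)$ directly from a polynomial solution of $q'=aq+b$; this makes your proof self-contained, since it reproves the relevant direction of Shamsuddin's criterion rather than quoting it. The remaining computations check out: $a(x)=a(x+\lambda)$ with $a$ nonconstant forces $\lambda=0$ (else $a$ would have infinitely many roots of $a(x)-a(r)$), and $h'=ah$ with $a\neq 0$ forces $h=0$ by degree count.

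On the general case, you are candid that you have only a strategy, and as stated it has a real gap: the set of pairs $(f,g)$ satisfying the commutation equations is not obviously a finite-dimensional algebraic set, since without an a priori degree bound on $f$ and $g$ the phrase ``zero-dimensional'' does not yet apply. The paper's \S 3 takes a different tack and obtains concrete partial results you do not address: any element of the isotropy fixing a maximal ideal is the identity (Proposition \ref{pro3.3}, proved via uniqueness of solutions through a point), consequences on finite order when an invariant curve is singular or has large genus, and, over $\C$, that every element of the isotropy has dynamical degree $1$ (Corollary \ref{cor.3.3}, via Friedland--Milnor counting of fixed points of iterates). Any serious attack on the full conjecture would likely need to combine a degree bound of that dynamical flavor with your commutation equations.
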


At a first moment, in the \S 2, we show that the conjecture is true for a family of derivations, named Shamsuddin derivations (Theorem \ref{thm3.2}). For this, we use a theorem of the Shamsuddin \cite{Shams}, mentioned in \cite[Theorem 13.2.1.]{Now}, that determines a condition that would preserve the simplicity by extending, in some way, the derivation to $ R[t]$, with $ t $ an indeterminate. The reader may also remember that Y.Lequain \cite{Leq} showed that these derivations check a conjecture about the $\A_n$, the Weyl algebra over $k$.

In order to understand the isotropy of a simple derivation of the $k[x,y]$, in \S 3, we analysed necessary conditions for an automorphism to belong to the isotropy of a simple derivation. For example, we prove that if such an automorphism has a fixed point, then it is the identity (Proposition \ref{pro3.3}). Following, we present the definition of \emph{dynamical degree} of a polynomial application and thus proved that in the case $k = \C $, the elements in $ \aut(\C [x,y])_d $, with $d$ a simple derivation, has dynamical degree $1$ (Corollary \ref{cor.3.3}). More precisely, the condition dynamical degree $>1$
corresponds to exponential growth of degree under iteration, and this may be viewed as a complexity of the automorphism in the isotropy (see \cite{FM}).

\section{Shamsuddin derivation}

The main aim of this section is study the isotropy group of the a Shamsuddin derivation in $k[x,y]$. In \cite[\S 13.3]{Now}, there are numerous examples of these derivations and also shown a criterion for determining the simplicity; furthermore, Y.Lequain \cite{Leq2008} introduced an algorithm for determining when an Shamsuddin derivation is simple. However, before this, the following example shows the isotropy of an arbitrary derivation can be complicated.

\begin{exa} Let be $d=\partial_x \in \derk(k[x,y])$ and $\rho \in \aut(k[x,y])_d$. Note that $d$ is not a simple derivation; indeed, for any $u(y) \in k[y]$, the ideal generated by $u(x)$ is always invariant. Consider $$\rho(x)=f(x,y)=a_0(x)+ a_1(x)y+\ldots+a_t(x)y^t$$ $$\rho(y)=g(x,y)=b_0(x)+ b_1(x)y+\ldots+b_s(x)y^s.$$ Since $\rho \in \aut(k[x,y])_d$, we obtain two conditions:

\textbf{1)} $\rho (d(x))=d(\rho (x)).$

Thus, $$1=d(a_0(x)+ a_1(x)y+\ldots+a_t(x)y^t)=d(a_0(x))+d(a_1(x))y+\ldots+d(a_t(x))y^t.$$

Then, $d(a_0(x))=1$ and $d(a_j(x))=0$, $j=1,\ldots,t$. We conclude that $\rho (x)$ is of the type $$\rho (x)=x+c_0+c_1y+\ldots+c_ty^t, \ \ c_i \in k.$$

\textbf{2)} $\rho (d(y))=d(\rho (y)).$

Analogously, $$0=d(b_0(x)+b_1(x)y+\ldots+b_s(x)y^s)=d(b_0(x))+d(b_1(x))y+\ldots+d(b_s(x))y^s.$$

That is, $b_i(x)=d_i$ with $d_i \in k$. We conclude also that $\rho (y)$ is of the type $$\rho (y)=d_0+d_1y+\ldots+d_sy^s, \ \ d_i \in k.$$

Thus, $\aut(k[x,y])_d$ contains the affine automorphisms $$(x+uy+r, uy+s),$$ with $u, r, s \in k$. In particular, $\aut(k[x,y])_d$ is not finite.

Notice that $\aut(k[x,y])_d$ contains also the automorphisms of the type $(x+p(y), y)$, with $p(y) \in k[y]$.

Now, we determine indeed the isotropy. Using only the conditions $1$ and $2$, $$\rho=(x+p(y), q(y))$$ with $p(y), q(y) \in k[y]$. However, $\rho$ is an automorphism, in other words, the determinant of the Jacobian matrix must be a nonzero constant. Thus, $|J_\rho|=q'(y)=c$, $c \in k^*$. Therefore, $\rho=(x+p(y), ay+c)$, with $p(y) \in k[y]$ and $a,b \in k$. Consequently, $\aut(k[x,y])_d$ is not finite and, more than that, the first component has elements with any degree.
\end{exa}

The following lemma is a well known result.

\begin{lem} Let $R$ be a commutative ring, $d$ a derivation of $R$ and $h(t) \in R[t]$, with $t$ an indeterminate. Then, we can also extend $d$ to a unique derivation $\tilde{d}$ of the $R[t]$ such that $\tilde{d}(t)=h(t)$.
\end{lem}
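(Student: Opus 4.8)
The plan is to establish uniqueness and existence separately; the point is that the derivation axioms leave no freedom once $d$ and $h(t)$ are fixed.

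\textbf{Uniqueness.} Suppose $\tilde d$ is any derivation of $R[t]$ with $\tilde d|_R = d$ and $\tilde d(t) = h(t)$. An arbitrary element of $R[t]$ has the form $p = \sum_{i=0}^n a_i t^i$ with $a_i \in R$. Since $\tilde d$ is additive, it suffices to compute it on each monomial $a_i t^i$. By the Leibniz rule, $\tilde d(a_i t^i) = d(a_i) t^i + a_i \tilde d(t^i)$, and an easy induction on $i$ using Leibniz gives $\tilde d(t^i) = i\,t^{i-1} h(t)$. Hence $\tilde d(p) = \sum_{i=0}^n d(a_i) t^i + \left(\sum_{i=1}^n i a_i t^{i-1}\right) h(t)$, so $\tilde d$ is completely determined by $d$ and $h$, which proves uniqueness.

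\textbf{Existence.} I would then define $\tilde d$ by precisely this formula and verify it works. Writing $D_0\left(\sum a_i t^i\right) = \sum d(a_i) t^i$ for the coefficientwise extension of $d$ (so that $D_0(t)=0$) and $\partial_t$ for formal differentiation with respect to $t$, the formula reads $\tilde d = D_0 + h(t)\,\partial_t$. Both $D_0$ and $\partial_t$ are additive maps satisfying the Leibniz rule on $R[t]$; since an $R[t]$-multiple of a derivation is again a derivation, $h(t)\,\partial_t$ is a derivation, and a sum of derivations is a derivation, so $\tilde d$ is a derivation of $R[t]$. Finally $\tilde d|_R = d$ because $\partial_t$ annihilates elements of $R$, while $\tilde d(t) = D_0(t) + h(t)\,\partial_t(t) = 0 + h(t)\cdot 1 = h(t)$, as required.

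The only actual computations are the verifications that $D_0$ and $\partial_t$ satisfy the Leibniz rule; both are routine inductions on the degrees of the polynomials, reducing by additivity to the case of two monomials. I do not expect any genuine obstacle here: the entire content of the lemma is that additivity together with the Leibniz rule forces the displayed formula, so that the extension exists and is unique once the value $\tilde d(t) = h(t)$ is prescribed.
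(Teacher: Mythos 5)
Your proof is correct. Note that the paper offers no proof of this lemma at all --- it is stated as ``a well known result'' with no argument given --- so there is nothing to compare against; your argument (uniqueness forced by additivity and the Leibniz rule, existence via the explicit formula $\tilde d = D_0 + h(t)\,\partial_t$, checking that a sum of derivations and an $R[t]$-multiple of a derivation are again derivations) is the standard one and completely fills the gap the paper leaves implicit.
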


We will use the following result of Shamsuddin \cite{Shams}.

\begin{thm} \label{thm4}
Let $R$ be a ring containing $\Q$ and let $d$ be a simple derivation of $R$. Extend the derivation $d$ to a derivation $\tilde{d}$ of the polynomial ring $R[t]$ by setting $\tilde{d}(t) = at + b$ where $a,b \in R$. Then the following two conditions are equivalent:

$(1)$  $\tilde{d}$ is a simple derivation.

$(2)$ There exist no elements $r\in R$ such that $d(r)=ar+b$.
\end{thm}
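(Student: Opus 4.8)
The plan is to prove the equivalence by establishing the contrapositive in each direction, working with the concrete structure of ideals in $R[t]$ that are $\tilde{d}$-stable. The key conceptual point is that an element $r \in R$ satisfying $d(r) = ar+b$ gives exactly the defect that allows us to build a nontrivial $\tilde{d}$-stable ideal, namely the principal ideal generated by $t - r$.

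First I would prove the implication $(2) \Rightarrow (1)$ by contraposition: assume $\tilde{d}$ is not simple and produce an $r \in R$ with $d(r) = ar + b$. Let $I$ be a proper nonzero $\tilde{d}$-stable ideal of $R[t]$. The first step is to control $I \cap R$: since $d$ is the restriction of $\tilde{d}$ and $d$ is simple, $I \cap R$ is a $d$-stable ideal of $R$, so it is either $0$ or all of $R$; the latter would force $1 \in I$, contradicting properness, so $I \cap R = 0$. The next step is to choose in $I$ a nonzero polynomial $f(t) = c_n t^n + \cdots + c_0$ of \emph{minimal degree} $n$ in $t$. Applying $\tilde{d}$ and using $\tilde{d}(t) = at+b$, one computes that $\tilde{d}(f) - (d(c_n)/c_n + \text{(correction)})\, f$ has degree strictly less than $n$ in $t$ but still lies in $I$; by minimality of $n$ this lower-degree element must vanish. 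Tracking the top coefficient forces $c_n$ to be a unit (using $I \cap R = 0$ to kill the constant obstruction), so we may normalize $c_n = 1$, and tracking the remaining coefficients forces $n = 1$, giving $f = t - r$ for some $r \in R$. Then $\tilde{d}(t - r) = (at + b) - d(r)$ must be a multiple of $t - r$ inside $I$; comparing degrees shows it is a constant multiple, and matching coefficients yields precisely $d(r) = ar + b$, completing this direction.

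For the implication $(1) \Rightarrow (2)$, again by contraposition, I would assume there exists $r \in R$ with $d(r) = ar + b$ and exhibit a nontrivial $\tilde{d}$-stable ideal, showing $\tilde{d}$ is not simple. The natural candidate is the principal ideal $(t - r)$ of $R[t]$. The verification is a direct computation: $\tilde{d}(t - r) = (at + b) - d(r) = at + b - (ar + b) = a(t - r)$, which lies in $(t-r)$, so by the Leibniz rule $\tilde{d}$ maps the whole ideal $(t-r)$ into itself. Since $t - r$ is neither a unit nor zero in $R[t]$, this is a proper nonzero $\tilde{d}$-stable ideal, so $\tilde{d}$ is not simple.

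I expect the main obstacle to be the minimal-degree argument in the $(2) \Rightarrow (1)$ direction, specifically justifying that the leading coefficient $c_n$ can be normalized to a unit and that the degree collapses to $n = 1$. The subtlety is that $R$ need not be a domain, so one cannot cavalierly invert leading coefficients; the argument must lean on $I \cap R = 0$ together with the simplicity of $d$ to show that the $d$-stable ideal of $R$ generated by $c_n$ forces $c_n$ to be invertible, and then propagate this through the lower coefficients. Getting these coefficient comparisons right — and handling the interaction between the $at$ and $b$ terms when $\tilde{d}$ acts on powers of $t$ — is where the real care is needed, whereas the reverse direction is essentially a one-line check.
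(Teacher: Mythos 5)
The paper itself gives no proof of this theorem --- it simply cites Nowicki's book (Theorem 13.2.1) --- so your attempt has to be measured against that standard argument. Your direction $(1)\Rightarrow(2)$, by contraposition, is correct and is exactly the standard one: if $d(r)=ar+b$ then $\tilde{d}(t-r)=(at+b)-d(r)=a(t-r)$, so the nonzero proper ideal $(t-r)$ is $\tilde{d}$-stable. The problems are concentrated in your $(2)\Rightarrow(1)$ direction, where two of your key steps are genuinely wrong, not just under-justified.

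First, your route to a monic minimal-degree polynomial fails: the principal ideal $(c_n)$ is not $d$-stable in general, and the fact that the smallest $d$-stable ideal containing $c_n$ equals $R$ (which is all that simplicity gives) does not make $c_n$ a unit. The correct device is to let $n$ be the minimal degree of nonzero elements of $I$ and to consider $J=\{0\}\cup\{\text{leading coefficients of degree-}n\text{ elements of }I\}$. This $J$ is an ideal of $R$; it is $d$-stable because the coefficient of $t^n$ in $\tilde{d}(f)$ equals $d(c)+nac$, which lies in $J$ by minimality of $n$, while $nac\in J$ since $J$ is an ideal, whence $d(c)\in J$. Simplicity gives $J=R$, so $I$ contains \emph{some} monic polynomial of degree $n$ --- but your original $c_n$ need never be invertible. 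Second, and more seriously, your claim that minimality forces $n=1$ (so that $f=t-r$) is false: if some $r$ satisfies $d(r)=ar+b$, then $I=\left((t-r)^2\right)$ is a nonzero proper $\tilde{d}$-stable ideal, since $\tilde{d}\left((t-r)^2\right)=2a(t-r)^2$, and every nonzero element of it has degree at least $2$. So a nontrivial $\tilde{d}$-stable ideal need not contain any polynomial of degree one, and your final coefficient-matching step has nothing to apply to. The standard proof never needs $n=1$: taking $f=t^n+c_{n-1}t^{n-1}+\cdots+c_0\in I$ monic of minimal degree $n\geq 1$, the element $\tilde{d}(f)-naf\in I$ has degree $<n$, hence is zero by minimality; its coefficient at $t^{n-1}$ yields $d(c_{n-1})=ac_{n-1}-nb$, and then $r:=-c_{n-1}/n$ satisfies $d(r)=ar+b$, the desired conclusion. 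Note that this last step divides by $n$, which is precisely where the hypothesis $\Q\subseteq R$ enters; your outline never invokes that hypothesis, and that is a warning sign, because the theorem is false without it: over a field $k$ of characteristic $p$ with $d=0$ (a simple derivation), $a=0$, $b=1$, condition $(2)$ holds, yet $(t^p)$ is $\tilde{d}$-stable, so $\tilde{d}$ is not simple.
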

\begin{proof} See \cite[Theorem 13.2.1.]{Now} for a demonstration in details.
\end{proof}

A derivation $d$ of $k[x,y]$ is said to be a \emph{Shamsuddin derivation} if $d$ is of the form $$d=\partial_x+(a(x)y+b(x))\partial_y,$$ where $a(x),b(x) \in k[x]$.

\begin{exa} Let $d$ be a derivation of $k[x,y]$ as follows
\[d=\partial_x+(xy+1)\partial_y.\]

Writing $R=k[x]$, we know that $R$ is $\partial_x$-simple and, taking $a=x$ and $b=1$, we are exactly the conditions of Theorem \ref{thm4}. Thus, we know that $d$ is simple if, and only if, there exist no elements $r \in R$ such that $\partial_x (r)=xr+1$; but the right side of the equivalence is satisfied by the degree of $r$. Therefore, by Theorem \ref{thm4}, $d$ is a simple derivation of $k[x,y]$.
\end{exa}

\begin{lem} \label{lem3.2} (\cite[Proposition. 13.3.2]{Now}) Let $d=\partial_x+(a(x)y+b(x))\partial_y$ be a Shamsuddin derivation, where $a(x),b(x) \in k[x]$. Thus, if $d$ is a simple derivation, then $a(x) \neq 0$ and $b(x) \neq 0$.
\end{lem}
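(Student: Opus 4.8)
The plan is to argue by contrapositive, using Shamsuddin's criterion (Theorem \ref{thm4}). First I would set $R=k[x]$ and record that $\partial_x$ is a simple derivation of $R$: given any nonzero $\partial_x$-stable ideal $I$, pick $f\in I$ nonzero of minimal degree; then $f'\in I$ has strictly smaller degree, so $f'=0$, forcing $f$ to be a nonzero constant and hence $I=R$. With this in hand, $d$ is precisely the extension $\tilde d$ of $\partial_x$ from $R$ to $R[y]=k[x,y]$ determined by $\tilde d(y)=a(x)y+b(x)$. Thus Theorem \ref{thm4} applies with the dictionary $t\leftrightarrow y$, $a\leftrightarrow a(x)$, $b\leftrightarrow b(x)$, and it tells us that $d$ is simple if and only if there is no $r\in k[x]$ satisfying $r'=a(x)r+b(x)$.

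The heart of the argument is then to produce such an $r$ in each degenerate case, thereby obstructing simplicity. If $a(x)=0$, the equation reduces to $r'=b(x)$, which always admits a polynomial solution because every element of $k[x]$ has an antiderivative in $k[x]$ (the base field has characteristic zero); the resulting $r$ shows $d$ is not simple. If instead $b(x)=0$, the equation reads $r'=a(x)r$, and here the trivial choice $r=0$ is already a solution, so again $d$ fails to be simple. Contraposing Theorem \ref{thm4} then gives exactly the claim: if $d$ is simple, then both $a(x)\neq 0$ and $b(x)\neq 0$.

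There is essentially no serious obstacle in this proof; the whole content is the reduction of simplicity to solvability of the linear equation $r'=a(x)r+b(x)$ in $k[x]$. The only points requiring any care are the hypotheses needed to invoke Theorem \ref{thm4}—that $R=k[x]$ is $\partial_x$-simple and that $d$ genuinely has the extension form $\tilde d$—after which the two degenerate cases are dispatched by inspection, via $r=0$ when $b(x)=0$ and via an antiderivative of $b(x)$ when $a(x)=0$. I therefore expect the final write-up to be quite short.
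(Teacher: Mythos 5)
Your proof is correct, but it takes a different (though closely related) route from the paper's. The paper's proof is a direct two-line construction of invariant ideals: if $b(x)=0$ then $(y)$ is $d$-stable, since $d(y)=a(x)y \in (y)$; and if $a(x)=0$ then $(y-h)$ is $d$-stable for any antiderivative $h$ of $b(x)$, since $d(y-h)=b(x)-h'=0$. You instead route everything through Shamsuddin's criterion (Theorem \ref{thm4}), which requires first checking that $k[x]$ is $\partial_x$-simple (your minimal-degree argument is fine, and characteristic zero is used correctly) and then exhibiting a polynomial solution of $r'=a(x)r+b(x)$ in each degenerate case: $r=0$ when $b(x)=0$, and an antiderivative of $b(x)$ when $a(x)=0$. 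The two arguments are essentially the same construction in disguise: a solution $r$ of $d(r)=a(x)r+b(x)$ yields the $d$-stable ideal $(y-r)$, because $d(y-r)=a(x)(y-r)$, and your two solutions correspond exactly to the paper's two ideals $(y)$ and $(y-h)$. Note also that you only ever use the easy direction of Theorem \ref{thm4} (existence of a solution implies non-simplicity), which is precisely what the paper verifies by hand. What your approach buys is uniformity, since both cases are dispatched by the same criterion already available in the paper; what it costs is self-containedness and brevity, since the paper's version needs no appeal to Theorem \ref{thm4} and no verification that $k[x]$ is $\partial_x$-simple.
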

\begin{proof} If $b(x)=0$, then the ideal $(y)$ is $d$-invariante. If $a(x)=0$, let $h(x) \in k[x]$ such that $h'=b(x)$, then the ideal $(y-h)$ is $d$-invariante.
\end{proof}

One can determine the simplicity of the a Shamsuddin derivation according the polynomials $a(x)$ and $b(x)$ (see (\cite[\S 13.3]{Now})).

\begin{thm} \label{thm3.2} Let $D \in \derk(k[x,y])$ be a Shamsuddin derivation. If $D$ is a simple derivation, then $\aut(k[x,y])_D=\{id\}$.
\end{thm}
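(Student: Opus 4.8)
The plan is to write $\rho(x)=f$ and $\rho(y)=g$ and to translate the commutation relation $\rho D=D\rho$ into differential equations for $f$ and $g$, which I will then kill off using the simplicity criterion of Theorem \ref{thm4}. Evaluating $\rho D=D\rho$ on the generators and using $D(x)=1$ and $D(y)=a(x)y+b(x)$, one obtains the two conditions
\[
D(f)=1 \qquad\text{and}\qquad D(g)=a(f)\,g+b(f).
\]
The goal is to deduce $f=x$ and $g=y$, that is, $\rho=\mathrm{id}$.

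First I would determine $f$. Writing $f=\sum_{i=0}^{t}a_i(x)y^i$ and computing $D(f)=f_x+(a(x)y+b(x))f_y$, the coefficient of $y^{t}$ in the equation $D(f)=1$ is $a_t'+t\,a(x)a_t$, which must vanish for $t\ge 1$. Since $a(x)\neq 0$ by Lemma \ref{lem3.2}, the degree comparison $\deg a_t'<\deg(a\,a_t)$ shows this is impossible unless $t=0$; hence $f\in k[x]$, and $D(f)=f'=1$ forces $f=x+c$ for some $c\in k$. Next I would bring in the hypothesis that $\rho$ is an automorphism: with $f=x+c$ the Jacobian determinant is $\det J_\rho=g_y$, so $g_y\in k^*$, which means $g=\alpha y+h(x)$ with $\alpha\in k^*$ and $h\in k[x]$.

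Substituting $f=x+c$ and $g=\alpha y+h(x)$ into $D(g)=a(f)g+b(f)$ and comparing the coefficients of $y$ and the $y$-free parts yields
\[
a(x)=a(x+c) \qquad\text{and}\qquad \alpha\,b(x)+h'(x)=a(x+c)\,h(x)+b(x+c).
\]
Here I would first argue that $a$ is nonconstant: if $a$ were a nonzero constant, then the linear equation $r'=a\,r+b$ would be solvable in $k[x]$ (match coefficients from the top degree down), and by Theorem \ref{thm4} the existence of such an $r$ would contradict the simplicity of $D$. Thus $a$ is nonconstant, and $a(x)=a(x+c)$ then forces $c=0$. With $c=0$ the second relation simplifies to $h'=a\,h+(1-\alpha)b$.

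Finally I would invoke Theorem \ref{thm4} once more to remove the remaining freedom. If $\alpha\neq 1$, then $r:=h/(1-\alpha)$ satisfies $r'=a\,r+b$, again contradicting simplicity; hence $\alpha=1$, and the relation becomes $h'=a\,h$. Since $a\neq 0$, the degree comparison $\deg h'<\deg(a\,h)$ forces $h=0$. Therefore $g=y$, and together with $f=x$ this gives $\rho=\mathrm{id}$. I expect the main obstacle to be the correct use of Theorem \ref{thm4}: the whole argument hinges on recognizing that both the constant case for $a$ and the residual equation for $h$ package exactly into the forbidden equation $r'=ar+b$, so that simplicity can be used to eliminate the parameters $c$, $\alpha$ and $h$ in turn.
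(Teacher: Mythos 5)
Your proof is correct, and it follows the same overall skeleton as the paper's: translate $\rho D=D\rho$ into the two conditions $D(f)=1$ and $D(g)=a(f)g+b(f)$, show $f=x+c$ by a top-coefficient degree argument (using $a(x)\neq 0$ from Lemma \ref{lem3.2}), and then eliminate the remaining parameters by invoking Theorem \ref{thm4}. However, you deviate in three steps, each time replacing a computation or an outside citation by something more elementary, so the comparison is worth recording. (i) Where the paper determines the shape of $g=b_0(x)+\cdots+b_t(x)y^t$ by comparing coefficients of $y^t$ in condition $(2)$ — obtaining $a(x+c)=ta(x)$, hence $t=1$ and $b_t$ constant by comparing leading coefficients — you simply observe that $f=x+c$ forces $\det J_\rho=g_y\in k^*$, so $g=\alpha y+h(x)$ at once; this uses the automorphism hypothesis more directly and shortens the computation. (ii) For the case $a(x)$ constant, the paper appeals to Lequain \cite{Leq2008}; you instead note that for $a\in k^*$ the equation $r'=ar+b$ is always solvable in $k[x]$ (solve for the coefficients from the top degree down; equivalently, $r\mapsto r'-ar$ is injective, hence surjective, on each finite-dimensional space $k[x]_{\leq m}$), so simplicity would fail by Theorem \ref{thm4} — a self-contained contradiction. (iii) To rule out $\alpha\neq 1$, the paper introduces the auxiliary derivation $D'$ with $D'(y)=a(x)y+b(x)(1-b_1)$ and cites \cite[Proposition 13.3.3]{Now} to transfer simplicity from $D$ to $D'$; your rescaling $r:=h/(1-\alpha)$ converts the relation $h'=ah+(1-\alpha)b$ directly into the forbidden equation $r'=ar+b$ for $D$ itself, making the auxiliary derivation and its accompanying citation unnecessary. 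The net effect is that your argument needs only Theorem \ref{thm4}, Lemma \ref{lem3.2} and the Jacobian criterion for automorphisms, whereas the paper's needs two further external results; the only thing the paper's route buys in exchange is that its coefficient analysis of $g$ uses less than full invertibility of $\rho$ (only that $g$ genuinely involves $y$).
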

\begin{proof}

Let us denote $\rho(x)=f(x,y)$ and $\rho(y)=g(x,y)$. Let $D$ be a Shamsuddin derivation and $$D(x)=1,$$ $$D(y)=a(x)y+b(x),$$ where $a(x),b(x) \in k[x]$

Since $\rho \in \aut(k[x,y])_D$, we obtain two conditions:

$(1)$ $\rho (D(x))=D(\rho (x)).$

$(2)$ $\rho (D(y))=D(\rho (y)).$

Then, by condition $(1)$, $D(f(x,y))=1$ and since $f(x,y)$ can be written in the form $$f(x,y)=a_0(x)+ a_1(x)y+...+a_s(x)y^s,$$ with $s\geq 0$, we obtain
\[D(a_0(x))+D(a_1(x))y+a_1(x)(a(x)y+b(x))+\ldots\]
\[+D(a_s(x))y^s+ sa_s(x)y^{s-1}(a(x)y+b(x))=1\]

Comparing the coefficients in $y^s$, 
$$D(a_s(x))=-sa_s(x)a(x),$$ which can not occur by the simplicity. More explicitly, the Lemma \ref{lem3.2} implies that $a(x)=0$. Thus $s=0$, this is $f(x,y)=a_0(x)$. Therefore $D(a_0(x))=1$ and $f=x+c$, with $c$ constant.

Using the condition $(2)$,
$$\begin{array}{rl}
D(g(x,y))&=\rho (a(x)y+b(x))\\
&=\rho(a(x))\rho(y)+\rho(b(x))\\
&=a(x+c)g(x,y)+b(x+c)
\end{array}$$

Writing $g(x,y)=b_0(x)+ b_1(x)y+\ldots+b_t(x)y^t$; wherein, by the previous part, we can suppose that $t>0$, because $\rho$ is a automorphism. Thus
$$\begin{array}{rl}
a(x+c)g(x,y)+b(x+c) = &D(b_0(x))+D(b_1(x))y+b_1(x)(a(x)y+b(x))+\\
&+\ldots+D(b_t(x))y^t+tb_t(x)y^{t-1}(a(x)y+b(x)).
\end{array}$$

Comparing the coefficients in $y^t$, we obtain
$$D(b_t(x))+tb_t(x)a(x)=a(x+c)b_t(x)$$

Then $D(b_t(x))=b_t(x)(-ta(x)+a(x+c))$. In this way, $b_t(x)$ is a constant and, consequently, $a(x+c)=ta(x)$. Comparing the coefficients in the last equality, we obtain $t=1$ and then $b_1(x)=b_1$ a constant. Moreover, if $a(x)$ is not a constant, since $a(x+c)=a(x)$, is easy to see that $c=0$. Indeed, if $c \neq 0$ we obtain that the polynomial $a(x)$ has infinite distinct roots. If $a(x)$ is a constant, then $a(x)$ $D$ is not a simple derivation (a consequence of \cite[Lemma.2.6 and Theorem.3.2]{Leq2008}; thus, we obtain $c=0$.

Note that $g(x,y)=b_0(x)+ b_1y$ and, using the condition $(2)$ again,
$$\begin{array}{rl}
D(g(x,y))=&D(b_0(x))+b_1(a(x)y+b(x))\\
&=a(x)(b_0(x)+b_1y)+b(x).
\end{array}$$

Considering the independent term of $y$,
\begin{equation}
D(b_0(x))=b_0(x)a(x)+b(x)(1-b_1) \label{eq1.1}
\end{equation}

If $b_1 \neq 1$, we consider the derivation $D'$ such that $$D'(x)=1, \ \ D'(y)=a(x)y+b(x)(1-b_1).$$

In  \cite[Proposition. 13.3.3]{Now}, it is noted that $D$ is a simple derivation if and only if $D'$ is a simple derivation. Furthermore, by the Theorem \ref{thm4}, there exist no elements $h(x)$ in $K[x]$ such that $$ D(h(x))=h(x)a(x)+b(x)(1-b_1):$$

what contradicts the equation (\ref{eq1.1}). Then, $b_1=1$ and $D(b_0(x))=b_0(x)a(x)$, since $D$ is a simple derivation we know that $a(x) \neq 0$, consequently $b_0(x)=0$. This shows that $\rho=id$.
\end{proof}

\section{On the isotropy of the simple derivations}

The purpose of this section is to study the isotropy in the general case of a simple derivation. More precisely, we obtain results that reveal some characteristics of the elements in $\aut(k[x,y])_D$. For this, we use some concepts presented in the previous sections and also the concept of dynamical degree of a polynomial application.

In \cite{BP}, which was inspired by \cite{BLL}, we introduce and study a general notion of solution associated to a Noetherian differential $k$-algebra and its relationship with simplicity.

The following proposition geometrically says that if an element in the isotropy of a simple derivation has fixed point then it is the identity automorphism.

\begin{pro} \label{pro3.3} Let $D \in \derk(k[x_1,...,x_n])$ be a simple derivation and $\rho \in \aut(k[x_1,...,x_n])_D$ an automorphism in the isotropy. Suppose that there exist a maximal ideal $\idealm \subset k[x_1,...,x_n]$ such that $\rho(\idealm)=\idealm$, then $\rho =id$.

\end{pro}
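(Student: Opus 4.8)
The plan is to manufacture, out of $\rho$, an ideal that is stable under $D$ yet forced to be proper, and then to let the simplicity of $D$ annihilate it. Write $R = k[x_1,\dots,x_n]$ and consider
\[
J := \big(\, f - \rho(f) \ :\ f \in R \,\big) \subseteq R,
\]
the ideal generated by all the differences $f - \rho(f)$. The entire argument rests on two short observations: the commuting hypothesis $\rho D = D\rho$ makes $J$ invariant under $D$, and the fixed-point hypothesis $\rho(\idealm)=\idealm$ forces $J \subseteq \idealm$, so that $J \neq R$. Simplicity of $D$ then leaves only the possibility $J = 0$, which is precisely the assertion $\rho = \mathrm{id}$.

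First I would check that $J$ is $D$-stable. For a generator $f - \rho(f)$, the relation $\rho D = D\rho$ gives
\[
D\big(f - \rho(f)\big) = D(f) - \rho\big(D(f)\big),
\]
which is again a difference of the same shape, with $f$ replaced by $D(f)$, hence a generator of $J$. Because $D$ is a derivation, $D(r g) = D(r)g + r D(g)$ shows that $D$ carries an arbitrary element $\sum_i r_i\,(f_i - \rho(f_i))$ of $J$ back into $J$; thus $D(J)\subseteq J$. This is where the commuting relation enters, and it is used only here.

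Next I would show that $J$ is proper, and this is the step that genuinely uses the hypothesis on $\idealm$. Since $k$ is algebraically closed, $R/\idealm = k$; write $f \equiv f(p) \bmod \idealm$ with $f(p) \in k$ the residue of $f$. As $\rho$ is a $k$-algebra automorphism it fixes constants, so $\rho(f) - f = \rho\big(f - f(p)\big) - \big(f - f(p)\big)$, and both terms lie in $\idealm$ because $f - f(p) \in \idealm$ and $\rho(\idealm) = \idealm$. Hence every generator of $J$ lies in $\idealm$, giving $J \subseteq \idealm \subsetneq R$. Finally, $J$ is then a $D$-stable ideal with $J \neq R$, so simplicity forces $J = 0$; but $J = 0$ says $f = \rho(f)$ for every $f \in R$, i.e. $\rho = \mathrm{id}$.

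I expect the only real difficulty to be conceptual rather than technical: hitting on the right ideal to build. Once one considers the ideal generated by the differences $f - \rho(f)$, both its $D$-stability and its properness are one-line verifications, and the two hypotheses of the proposition are each used exactly once. It is worth emphasizing that simplicity is indispensable, not a formality: the earlier Example with $d = \partial_x$ exhibits non-identity isotropy elements fixing a point, and in that case the ideal $J$ is $D$-stable but equals neither $0$ nor $R$.
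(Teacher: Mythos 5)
Your proof is correct, and it takes a genuinely different route from the paper. The paper's argument invokes the machinery of solutions of a derivation developed in \cite{BP}: it takes a solution $\varphi$ of $D$ passing through $\idealm$, observes that $\varphi\rho$ is again a solution through $\rho^{-1}(\idealm)=\idealm$, and concludes $\varphi\rho=\varphi$ by the uniqueness theorem of \cite{BP}, whence $\rho=id$ because simplicity forces $\varphi$ to be injective. Your argument replaces all of this external input with a single elementary construction: the difference ideal $J=(f-\rho(f) : f\in R)$, which is $D$-stable exactly because $\rho D=D\rho$, and which lies inside $\idealm$ because $R/\idealm=k$ (the Nullstellensatz, and the one place algebraic closure of $k$ enters: the induced automorphism of the residue field over $k$ is forced to be the identity) together with $\rho(\idealm)=\idealm$; simplicity then gives $J=0$, i.e.\ $\rho=id$. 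What your approach buys is self-containment and transparency -- each of the three hypotheses (commutation, fixed maximal ideal, simplicity) is used exactly once and visibly -- and it avoids any appeal to existence, uniqueness, or injectivity of formal solutions. What the paper's approach buys is coherence with the broader program of \cite{BP} (the point of that framework is precisely that solutions through maximal ideals detect simplicity), so the proposition there appears as a corollary of a general theory rather than an ad hoc lemma. Your closing observation is also apt: for the non-simple $d=\partial_x$ and $\rho=(x+p(y),y)$ one gets $J=(p(y))$, a proper nonzero $d$-stable ideal, confirming that simplicity is the engine of the argument and not a cosmetic hypothesis.
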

\begin{proof} Let $\varphi$ be a solution of $D$ passing through $\idealm$ (see \cite[Definition.1.]{BP}). We know that $\dfrac{\partial} { \partial t} \varphi= \varphi D$ and $\varphi ^{-1}((t))=\idealm$. If $\rho \in \aut(k[x_1,...,x_n])_D$, then $$\dfrac{\partial} { \partial t}\varphi  \rho=\varphi  D  \rho=\varphi  \rho D.$$

In other words, $\varphi \rho$ is a solution of $D$ passing through $\rho^{-1}(\idealm)=\idealm$. Therefore, by the uniqueness of the solution (\cite[Theorem.7.(c)]{BP}), $\varphi \rho= \varphi$. Note that $\varphi$ is one to one, because $k[x_1,...,x_n]$ is $D$-simple and $\varphi$ is a nontrivial solution. Then, we obtain that $\rho =id$.

\end{proof}

F. Lane, in \cite{Lane}, proved that every $k$-automorphism $\rho$ of $k[x,y]$ leaves a nontrivial proper ideal $I$ invariant, over an algebraically closed field; this is, $\rho (I)\subseteq I$. Em \cite{Shams82}, A. Shamsuddin proved that this result does not extend to $k[x,y,z]$, proving that the $k$-automorphism given by $\chi(x)=x+1$, $\chi(y)=y+xz+1$ e $\chi(z)=y+(x+1)z$ has no nontrivial invariant ideal.

Note that, in addition, $\rho$ leaves a nontrivial proper ideal $I$ invariant if and only if $\rho (I)=I$, because $k[x,y]$ is Noetherian. In fact, the ascending chain $$I \subset \rho^{-1}(I)\subset \rho^{-2}(I) \subset \ldots  \subset \rho^{-l}(I)\subset \ldots  $$ must stabilize; thus, there exists a positive integer $n$ such that $\rho^{-n}(I)=\rho^{-n-1}(I)$, then $\rho(I)=I$. 

Suppose that $\rho \in \aut(k[x,y])_D$ and $D$ is a simple derivation of $k[x,y]$. If this invariant ideal $I$ is maximal, by the Proposition \ref{pro3.3}, we have $\rho=id$.

Suppose that $I$, this invariant ideal, is radical. Let $I=(\idealm_1\cap \ldots \cap \idealm_s) \cap (\idealp_1 \cap \ldots \cap \idealp_t)$ be a primary decomposition where each ideal $\idealm_i$ is a maximal ideal and $\idealp_j$ are prime ideals with height $1$ such that $\idealp_j=(f_j)$, with $f_j$ irreducible (by  \cite[Theorem 5.]{Kaplan}). If $$\rho (\idealm_1\cap \ldots \cap \idealm_s)=\idealm_1\cap \ldots \cap \idealm_s,$$ we claim that $\rho^N$ leaves invariant one maximal ideal for some $N \in \N$: suppose $\idealm_1$ this ideal. Indeed, we know that $\rho(\idealm_1)\supset \idealm_1\cap \ldots \cap \idealm_s $, since $\rho(\idealm_1)$ is a prime ideal, we deduce that  $\rho(\idealm_1) \supseteq \idealm_i$, for some $i=1,\ldots,s$ (\cite[Prop.11.1.(ii)]{AM}). Then, $\rho(\idealm_1) = \idealm_i$; that is, $\rho^N$ leaves invariant the maximal ideal $\idealm_1$ for some $N \in \N$. Thus follows from  Proposition \ref{pro3.3} that $\rho^N=id$.

Note that $\rho(\idealp_1 \cap \ldots \cap \idealp_t)=\idealp_1 \cap \ldots \cap \idealp_t$. In fact, writing $\idealp_1 \cap \ldots \cap \idealp_t=(f_1\ldots f_t)$, with $f_i$ irreducible, we can choose $h \in \idealm_1\cap \ldots \cap \idealm_s$ such that $\rho(h) \not\in \idealp_1$. We observe that there exists $h$. Otherwise, we obtain $\idealm_1\cap \ldots \cap \idealm_s \subset \idealp_1$, then $\idealp_1 \supseteq \idealm_i$, for some $i=1,\ldots,s$ (\cite[Prop.11.1.(ii)]{AM}): a contradiction. Thus, since $hf_1\ldots f_t \in I$, we obtain $\rho(h)\rho(f_1)\ldots \rho(f_t) \in I \subset \idealp_1$. Therefore, $\rho(f_1 \ldots f_t) \in \idealp_1$. Likewise, we conclude the same for the other primes $\idealp_i$, $i=1,\ldots,t$. Finally, $\rho(\idealp_1 \cap \ldots \cap \idealp_t)=\idealp_1 \cap \ldots \cap \idealp_t$.

With the next corollary, we obtain some consequences on the last case.

\begin{cor} Let $\rho \in \aut(k[x,y])_D$, $D$ a simple derivation of $k[x,y]$ and $I=(f)$, with $f$ reduced, a ideal with height $1$ such that $\rho(I)=I$. If $V(f)$ is singular or some irreducible component $C_i$ of $V(f)$ has genus greater than two, then $\rho$ is a automorphism of finite order.
\end{cor}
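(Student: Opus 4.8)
The guiding principle is Proposition \ref{pro3.3}: an element of $\aut(k[x,y])_D$ that fixes a maximal ideal must be the identity. Hence, to prove that $\rho$ has finite order, the plan is to exhibit an integer $N$ for which the power $\rho^{N}$ (which again lies in the subgroup $\aut(k[x,y])_D$) has a fixed point; Proposition \ref{pro3.3} will then force $\rho^{N}=\mathrm{id}$. To begin, I would record that the hypothesis $\rho(I)=I$ means $\rho(f)=uf$ for some $u\in k^{*}$, so that $\rho$ carries the curve $C=V(f)$ isomorphically onto itself and therefore induces an automorphism of $C$; in particular every structure canonically attached to $C$ is respected by $\rho$.

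Suppose first that $C=V(f)$ is singular. Since $f$ is reduced, the singular locus $\sing(C)$ is a nonempty finite set of closed points, and because $\rho$ induces an automorphism of $C$ it permutes $\sing(C)$. A finite permutation has finite order, so some power $\rho^{N}$ fixes each point of $\sing(C)$; choosing the maximal ideal $\idealm$ of any such point we get $\rho^{N}(\idealm)=\idealm$, and Proposition \ref{pro3.3} yields $\rho^{N}=\mathrm{id}$.

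Now suppose some irreducible component $C_i$ of $C$ has genus at least two. The automorphism induced by $\rho$ permutes the finitely many irreducible components of $C$, so after replacing $\rho$ by a power $\rho^{m}$ we may assume $\rho^{m}$ fixes $C_i$, i.e. $\rho^{m}\big((f_i)\big)=(f_i)$, where $f_i$ is the irreducible factor of $f$ cutting out $C_i$. Thus $\rho^{m}$ descends to a $k$-automorphism $\bar\rho$ of the coordinate ring $k[x,y]/(f_i)$, which extends to an automorphism of the function field $k(C_i)$ and hence of the smooth projective model $\widetilde{C_i}$. By the Hurwitz bound, $\aut(\widetilde{C_i})$ is finite whenever the genus is at least two, so $\bar\rho$ has finite order $r$; then $\rho^{mr}$ acts as the identity on $\widetilde{C_i}$, hence on $k(C_i)$ and on $k[x,y]/(f_i)$, so $\rho^{mr}$ fixes every closed point of $C_i$. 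Applying Proposition \ref{pro3.3} to any such point gives $\rho^{mr}=\mathrm{id}$.

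The genuinely substantive step is the passage, in the genus case, from an automorphism of the possibly singular affine component $C_i$ to an automorphism of its smooth projective model $\widetilde{C_i}$, together with the invocation of the finiteness of $\aut(\widetilde{C_i})$ for genus at least two; the singular case is comparatively immediate once one notices that $\rho$ preserves the finite set $\sing(C)$. In both cases the conclusion is in fact stronger than stated: we obtain a power of $\rho$ equal to the identity, so $\rho$ is of finite order.
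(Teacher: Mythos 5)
Your proof is correct and takes essentially the same approach as the paper: in the singular case a power of $\rho$ fixes a singular point and Proposition \ref{pro3.3} applies, and in the genus case the Hurwitz bound makes the automorphism induced on the component of finite order. Your write-up is in fact slightly more complete than the paper's, since in the genus case you make explicit the final step --- passing to the smooth projective model to invoke Hurwitz, and then noting that $\rho^{mr}$ fixes every closed point of $C_i$ so that Proposition \ref{pro3.3} forces $\rho^{mr}=\mathrm{id}$ --- whereas the paper only asserts ``we deduce that $\rho$ is an automorphism of finite order'' without this argument.
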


\begin{proof} Suppose that $V(f)$ is not a smooth variety and let $q$ be a singularity of $V(f)$. Since the set of the singular points is invariant by $\rho$, then there exist $N \in \N$ such that $\rho^N(q)=q$. Using that $\rho \in \aut(k[x,y])_D$, we obtain, by Proposition \ref{pro3.3}, $\rho^N=id$.

Let $C_i$ be a component irreducible of $V(f)$ that has genus greater than two. Note that there exist $M \in \N$ such that $\rho^M(C_i)=C_i$. By \cite[Thm. Hunvitz, p.241]{Fark}, the number of elements in $\aut(C_i)$ is finite; in fact, $\#( \aut(C_i)) < 84(g_i-1)$, where $g_i$ is the genus of $C_i$. Then, we deduce that $\rho$ is a automorphism of finite order.

\end{proof}

We take for the rest of this section $k=\C.$

Consider a polynomial application $f(x,y)=(f_1(x,y),f_2(x,y)):\C^2\rightarrow \C^2$ and define the degree of $f$ by $ \deg (f):=\max(\deg(f_1),\deg(f_2))$. Thus we may define the
dynamical degree (see \cite{BD}, \cite{FM}, \cite{Silv}) of $f$ as $$\delta(f):= \displaystyle\lim_{n \to \infty}(\deg(f^n))^{\frac{1}{n}}.$$

\begin{cor} \label{cor.3.3}
If $\rho \in \aut(\C[x,y])_D$ and $D$ is a simple derivation of $\C[x,y]$, then $\delta(\rho)=1$.
\end{cor}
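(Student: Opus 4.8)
The plan is to argue by contradiction through the classification of plane polynomial automorphisms, reducing everything to the fixed-point principle of Proposition \ref{pro3.3}. Since $\delta(\rho)\geq 1$ always holds, it suffices to rule out $\delta(\rho)>1$, so I assume $\delta(\rho)>1$ and derive a contradiction. Two elementary observations are used throughout. First, every power $\rho^N$ again lies in the isotropy: from $\rho D=D\rho$ one gets $\rho^N D=D\rho^N$ by induction, so Proposition \ref{pro3.3} is available for each $\rho^N$. Second, an automorphism of finite order has dynamical degree $1$: if $\rho^N=\mathrm{id}$ then the sequence $\deg(\rho^n)$ is periodic, hence bounded, and $(\deg(\rho^n))^{1/n}\to 1$.

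The core step is to show that $\delta(\rho)>1$ forces $\rho$ to possess a periodic point in $\C^2$. Here I would invoke the Friedland--Milnor classification of $\aut(\C^2)$ (see \cite{FM}): an automorphism with $\delta(\rho)>1$ is conjugate to a composition of generalized H\'enon maps and in particular has positive entropy $\log\delta(\rho)$. Such maps are dynamically nondegenerate; the count of points of period $n$ is positive for large $n$ (growing like $\delta(\rho)^n$), so there is a point $p\in\C^2$ and an integer $N\in\N$ with $\rho^N(p)=p$. Writing $\idealm$ for the maximal ideal of $p$, this periodicity translates into $\rho^N(\idealm)=\idealm$.

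To close the loop: since $\rho^N\in\aut(\C[x,y])_D$ and $\rho^N$ fixes the maximal ideal $\idealm$, Proposition \ref{pro3.3} yields $\rho^N=\mathrm{id}$. Hence $\rho$ has finite order, so $\delta(\rho)=1$ by the second preliminary observation, contradicting $\delta(\rho)>1$. Therefore $\delta(\rho)=1$, as claimed.

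The main obstacle is the dynamical input of the second paragraph: one must guarantee that an automorphism of dynamical degree greater than $1$ actually has a periodic point lying in the affine plane $\C^2$ (not merely at infinity), and that its period produces a genuinely $\rho^N$-invariant maximal ideal. This is precisely what the Friedland--Milnor analysis of H\'enon-type maps supplies, and it is the only place where the hypothesis $k=\C$ and the theory of dynamical degree are essential; the rest is the purely algebraic fixed-point principle of Proposition \ref{pro3.3}. As an alternative to the periodic-point count, one could use instead that a H\'enon-type automorphism admits no invariant algebraic curve, combined with Lane's theorem and the invariant-ideal analysis preceding this corollary: the invariant radical ideal could then contain no height-one component, so its primary decomposition would consist of maximal ideals only, and a suitable power $\rho^N$ would fix one of them, again forcing $\rho^N=\mathrm{id}$ via Proposition \ref{pro3.3}.
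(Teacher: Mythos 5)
Your proposal is correct and follows essentially the same route as the paper: both assume $\delta(\rho)>1$, invoke the Friedland--Milnor result that such an automorphism has periodic points in $\C^2$ (the paper cites their Theorem 3.1 giving exactly $\delta(\rho)^n$ fixed points of $\rho^n$ counted with multiplicity), and then apply Proposition \ref{pro3.3} to the corresponding power of $\rho$, which lies in the isotropy and fixes a maximal ideal, forcing it to be the identity and contradicting $\delta(\rho)>1$. If anything, your write-up is more careful than the paper's, which compresses the passage from ``$\rho^n$ has a fixed point'' to ``$\rho=\mathrm{id}$'' without spelling out the finite-order step you make explicit.
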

\begin{proof}
Suppose $\delta(\rho)>1$. By \cite[Theorem 3.1.]{FM}, $\rho^n$ has exactly $\delta(\rho)^n$ fix points counted with multiplicities. Then, by Proposition \ref{pro3.3}, $\rho=id$, which shows that dynamical degree of $\rho$ is 1.
\end{proof}

\renewcommand{\abstractname}{Acknowledgements}
\begin{abstract}
I would like to thank Ivan Pan for his comments and suggestions.
\end{abstract}

\
\


\begin{thebibliography}{CAS94}
\bibitem[AM1969]{AM} M. F. Atiyah, I. G. Macdonald, \emph{Introduction to Commutative Algebra}, Addison-Wesley Publishing Company, Massachusetts, 1969.

\bibitem[BP2015]{BP} R. Baltazar, I. Pan, \emph{On solutions for derivations of a Noetherian $k$-algebra and local simplicity}, To appear in Communications in Algebra (2015).


\bibitem[B2014]{B2014} R. Baltazar, \emph{Sobre soluões de derivações em k-álgebras Noetherianas e simplicidade}, Tese de Doutorado, Universidade Federal do Rio Grande do Sul, 2014. at http://hdl.handle.net/10183/97877



\bibitem[BD2012]{BD} J. Blanc and J. Déserti, \emph{Degree Growth of Birational Maps of the Plane}, preprint, http://www.math.jussieu.fr/~deserti/articles/degreegrowth.pdf, 2012.

\bibitem[BLL2003]{BLL} P. Brumatti, Y. Lequain and D. Levcovitz, \emph{Differential simplicity in Polynomial Rings and Algebraic Independence of Power Series}, J. London Math. Soc. (2), 68, 615-630, 2003.













\bibitem[FK1992]{Fark} H.M. Farkas, I. Kra, \emph{Riemann Surfaces}, 2nd ed., Graduate Texts in Mathematics, Springer, 1992.



\bibitem[FM1989]{FM} S. Friedland, J. Milnor, \emph{Dynamical properties of plane polynomial automorphisms}, Ergodic Theory Dyn. Syst. 9., 67-99, 1989.






\bibitem[Kaplan74]{Kaplan} I. Kaplansky, \emph{Commutative Rings}, Chicago, 1974, (2nd edition).

\bibitem[KM2013]{Kour} S. Kour, A.K. Maloo, \emph{Simplicity of Some Derivations of k[x, y]}, Communications in Algebra, 41, 4, 2013.

\bibitem[Lane75]{Lane} D.R. Lane, \emph{Fixed points of affine Cremona transformations of the plane over an algebraically closed field}, Amer. J. Math., 97, 3, 707-732, 1975.

\bibitem[Leq2008]{Leq2008} Y. Lequain, \emph{Simple Shamsuddin derivations of $K[X,Y_1,...,Y_n]$: An algorithmic characterizarion}, J. Pure Appl. Algebra, 212, 2008.


\bibitem[Leq2011]{Leq} Y. Lequain, \emph{Cyclic irreducible non-holonomic modules over the Weyl algebra: An algorithmic characterization}, J. Pure Appl. Algebra, 215, 2011.



\bibitem[No1994]{Now} A. Nowicki, \emph{Polynomial derivations and their rings of constants}, at http://www-users.mat.umk.pl/~anow/ps-dvi/pol-der.pdf

\bibitem[No2008]{Now08} A. Nowicki, \emph{An Example of a Simple Derivation in Two Variables}, Colloq. Math., 113, No.1, 25-31, 2008.



\bibitem[Cec2012]{Cec} C. Saraiva, \emph{Sobre Derivações Simples e Folheações holomorfas sem Solução Algébrica}, Tese de Doutorado, 2012.

\bibitem[Sei1967]{Sei} A. Seidenberg, \emph{Differential ideals in rings of finitely generated type}, American Journal of Mathematics, 89, No. 1, 22-42, 1967.

\bibitem[Sh1977]{Shams} A. Shamsuddin, \emph{Ph.D. thesis}, Univesity of Leeds, 1977.

\bibitem[Sh1982]{Shams82} A. Shamsuddin, \emph{On Automorphisms of Polynomial Rings},  Bull. London Math. Soc. 14, 1982.

\bibitem[Sh1982']{Shams82'} A. Shamsuddin, \emph{Rings with automorphisms leaving no nontrivial proper ideals invariant},  Canadian Math. Bull. 25, 1982.

\bibitem[Silv12]{Silv} J.H. Silverman, \emph{Dynamical Degrees, Arithmetic Degrees, and Canonical Heights for Dominant Rational Self-Maps of Projective Space}, Ergodic Theory and Dynamical Systems, 2012. (Online publication)













\end{thebibliography}
\end{document}